\documentclass[10pt,a4paper,oneside]{article}
\usepackage {enumerate,amsthm,amsmath,amsfonts,psfrag}
\usepackage {authblk}
\usepackage {latexsym}
\usepackage {graphics,graphicx}
\usepackage {fancyhdr}
\usepackage [usenames,dvipsnames]{xcolor}
\usepackage {empheq}
\usepackage {subfigure}

\usepackage [colorlinks=true, a4paper=true, pdfstartview=FitV,linkcolor=blue, citecolor=blue, urlcolor=blue]{hyperref}
\usepackage {datetime}
\usepackage {natbib} 

\newcommand {\startenv} {\vskip 0.3ex \begin{tabular}{||l}\parbox[t]{0.90\linewidth}}
\newcommand {\stopenv} {\end{tabular}\vskip 0.3ex}

\theoremstyle{plain}
    \newtheorem{theorem}{Theorem}[section]
    \newtheorem{lemma}{Lemma}[section]

\theoremstyle{definition}   
    \newtheorem{definition}{Definition}[section]
    \newtheorem*{definition*}{Definition}
\theoremstyle{remark}       
    \newtheorem{remark}{Remark}[section]

    \newtheorem*{requirement*}{\underline{Requirement}}
    \newtheorem{example}{Example}



\pagenumbering{arabic} \oddsidemargin = 1cm \evensidemargin = 0.5 cm \textwidth = 15.5cm
\title{Entropy dissipation of moving mesh adaptation}
\author[1]{\normalsize{Maria Luk{\'a}{\v{c}}ov{\'a}-Medvi{d'}ov{\'a}}} 	\author[1]{Nikolaos Sfakianakis}
\affil[1]{Johannes Gutenberg University, Mainz, Germany}
\date{}
\begin{document}
\maketitle
\begin{abstract}
	Non-uniform grids and mesh adaptation have been a growing part of numerical simulation over the past years. It has been experimentally noted that mesh adaptation leads not only to locally 
	improved solution but also to numerical stability of the underlying method. There have been though only few results on the mathematical analysis of these schemes (see for example 
	\cite*{Sfakianakis.2008}) due to the lack of proper tools that incorporate both the time evolution and the mesh adaptation step of the overall algorithm. 

	In this paper we provide a method to perform the analysis of the mesh adaptation method, including both the mesh reconstruction and evolution of the solution. We moreover employ this method to 
	extract sufficient conditions -on the adaptation of the mesh- that stabilize a numerical scheme in the sense of the entropy dissipation. 
\end{abstract}


\section{Introduction}
	Hyperbolic conservation laws appear in various applications. For example, fundamental physical laws, the conservation of mass momentum and energy, lead to the Euler equations of gas dynamics. 
	Further examples arise in traffic flows, shallow water flows, magnetohydrodynamics and biology, see, e.g. \cite*{LeVeque.2002, Toro.2001, Godlewski.1990, Sfakianakis.2012}.

	Let us consider a scalar conservation law in one space dimension,
	\begin{equation}\label{SCL}
		u_t+f(u)_x=0,\quad x\in[a,b],\ t\in[0,T],
	\end{equation}
	with initial data $u_0\in L^\infty([a,b])$. In order to simplify the presentation we assume e.g. periodic boundary conditions.

	Adaptivity is a main theme in modern scientific computing of complex physical phenomena. It is important to investigate the behaviour of adaptive schemes for hyperbolic problems, 
	such as (\ref{SCL}), which exhibit several interesting and not trivial characteristics. In this work we 	study the behaviour of certain geometrically driven adaptive algorithms when combined 
	with the important class of entropy conservative schemes introduced by \cite*{Tadmor.1987, Tadmor.2003}.

	In every time step $t=t^n$ the mesh we consider is:
	$$M_x^n=\left\{a=x_1^n<\cdots<x_N^n=b\right\}$$
	with variable space step sizes $h_i^n=x_{i+1}^n-x_i^n$, $i=1 \ldots N-1$; the mesh is reconstructed in every time step $t^n$. Further, we consider a numerical approximation $U^n$ of the exact 
	solution $u$ over the mesh $M_x^n$ at time $t=t^n$ given as a
	$$U^n=\left\{u_1^n, \ldots, u_N^n\right\}.$$
	The construction and evolution of our non-uniform meshes and the time evolution of the approximate solutions is dictated by the Main Adaptive Scheme (MAS) which is described by the 
	following procedure:
 	\begin{itemize}
		\item in every time step, construct new mesh according to the prescribed adaptivity criterion,
		\item reconstruct the numerical solution over the new mesh,
		\item evolve the numerical solution in time using the numerical scheme.
	\end{itemize}
	MAS will be discussed in details in Section \ref{sec:MAS}; we note here that the number of spatial nodes is fixed and that the reconstruction of the mesh is realized by moving its points according 
	to the geometry of the numerical solution. 

	The use of non-uniform adaptively redefined meshes, in the context of finite differences, was first studied, among others, by \cite*{Harten.1983}, \cite*{Dorfi.1987}, and \cite*{Tang.2003}. 
	The approach that we follow, for the mesh reconstruction step of MAS (Step 1) was first introduced by \cite*{Arvanitis.2001} and by \cite*{Arvanitis.2002}.  Applications of MAS on several problems, 
	point out a strong stabilisation property emanating from the mesh reconstruction \cite*{Arvanitis.2004}, \cite*{Arvanitis.2006}, \cite*{Sfakianakis.2006}, \cite*{Sfakianakis.2008}, \cite*{Sfakianakis.2009}. 
	These stabilization properties led \cite*{Sfakianakis.2006} to combine MAS with the marginal class of entropy conservative schemes. The later were first introduced by \cite*{Tadmor.1987} 
	and further studied by \cite*{LeFloch.2000, Tadmor.2003, Lukacova_Tadmor.2008}. They are semi-discrete numerical schemes which satisfy an exact entropy equality. On one hand these 
	schemes are interesting on their own right, since they appear in the context of zero dispersion limits, complete integrable systems and computation of non-classical shocks. On the other hand they 
	are important as building blocks for the construction of entropy stable schemes \cite*{Tadmor.1987, Tadmor.2003}, \cite{Lukacova_Tadmor.2008}.

	We note that classical techniques for the analysis of numerical schemes, only include the time evolution step of the procedure. In order though to have a complete picture of the quality of the numerical 
	solution under a mesh adaptation procedure a broader analysis is needed. In this direction the work done in \cite{Sfakianakis.2009} has provided some constructive analysis tools. In the present paper 
	though we are able to combine in one relation the effect of both the time evolution and the mesh adaptation. Let us point out that our approach allows us to represent the effects of both the adaptive mesh
	reconstruction as well as finite volume scheme in one conservative update relation over a reference uniform mesh \eqref{CombNumScheme}. This approach allows to apply the entropy stability analysis 
	and derive a sufficient mesh adaptation criterion to control entropy production.


\section{Main  Adaptive Scheme (MAS)}\label{sec:MAS}~
	Using non-adaptive meshes -both uniform and non-uniform- the evolution of the numerical solution is dictated solely by the solution update. On the contrary, in the adaptive 
	mesh case, two more phenomena need to be taken into account; the construction of the new mesh and the solution update. These steps comprise the Main Adaptive Scheme (MAS):
	\begin{definition}[MAS]\label{MAS}
		Given mesh $M_x^n$ and approximations $U^n$,
		\begin{enumerate}
			\item (Mesh Reconstruction). Construct new mesh $M_x^{n+1}$
			\item (Solution Update). Reconstruct $U^n$ over $M_x^{n+1}$ to obtain $\hat U^n$.
			\item (Time Evolution). Evolve $\hat U^n$ in time to compute $U^{n+1}$ over $M_x^{n+1}$.
		\end{enumerate}
	\end{definition}

	It is important to note that in the case of a fixed mesh, uniform or non-uniform, there is no need for mesh reconstruction (Step 1.) and effectively no need for Step 2. In such 
	case MAS reduces to just the time evolution step (Step 3.) which is what is usually considered as a numerical scheme. The extra steps 
	of the adaptive MAS, on one hand, change significantly both the computation and the analysis of the numerical approximations, and on the other hand are responsible 
	for stabilization properties of the  MAS.

	In the mesh reconstruction step (Step 1.) the mesh nodes are relocated according to the	geometric information contained in the discrete numerical solution. The basic idea is geometric: 
	\begin{center}
		\emph{in areas where the numerical solution is more smooth/flat the density of the nodes is low, in areas where the numerical solution is less smooth/flat the density of the nodes should be higher.}
	\end{center}
	In fact, the mesh reconstruction process can be chosen in any suitable way. One possibility is to use the monitor function which reflects the curvature of the numerical solution.
	For details we refer \cite{Sfakianakis.2009, Arvanitis.2006}.

	Further, we consider the solution update procedure (Step 2. of MAS). The numerical solution $U^n$ is given on the old mesh $M_x^n$ and is recomputed as $\hat U^n$ on the new mesh 
	$M_x^{n+1}$. There are many ways for the reconstruction. In this work we use conservative piecewise constant reconstructions. 

	Finally, for the time evolution step (Step 3. of  MAS) we use any numerical scheme valid for non-uniform meshes. Denoting the mesh-solution pair by 
	$U^n=\left \{u_i^n,i=1\ldots N\right\}$, $M_x^n=\left\{C_i^n, |C_i^n|=h_i^n,i=1\ldots N \right\}$, we obtain in
	the case of finite volume scheme 
	\begin{equation}\label{NonUniScheme}
		u_i^{n+1}=\hat{u}_i^n-\frac{\Delta t}{h_i^{n+1}}\left(\hat F_{i+1/2}^n-\hat F_{i-1/2}^n\right).
	\end{equation}
	Here $\hat U^n=\left\{\hat u_i^n,i=1\ldots N\right\}$ is a reconstructed $U^n$ over $M_x^{n+1}$ and the numerical flux $F$ is decorated with $\hat ~$ since it is computed over the updated values $\hat U$. The numerical flux itself can be any numerical flux valid for non-uniform grids.
	We refer to \cite{Sfakianakis.2009} and \cite{Arvanitis.2006} for more details regarding both the implementation of numerical schemes over non-uniform meshes and their properties. 

\subsection{Reference uniform mesh}
	A schematic representation of MAS \eqref{MAS} in the form of mesh-solution pairs is the following
	\begin{equation}\label{2grid1}
		\big\{ M_x^n,U^n \big\}\xrightarrow{\text{mesh adapt.}}\big\{M_x^{n+1},\hat U^n\big\}\xrightarrow{\text{num. scheme}} \big\{M_x^{n+1},U^{n+1}\big\},
	\end{equation}
	where in the first part we have considered the Steps 1 and 2 of MAS and in the second part the Step 3.

	In parallel to MAS and \eqref{2grid1} we define a new set of mesh-solution pairs where the meshes are uniform, constant in time, of the same cardinallity as $M_x^n$ and discretizing the same 
	physical domain.

	\begin{definition}[Reference uniform mesh-solution pair]\label{RefMesh}
		Let $\{M_x,U\}$ and $\{\bar M, V\}$ be two mesh-solution pairs with $M_x=\{C_i,|C_i|=h_i,\; i=1 \ldots N\}$, $\bar M=\{\bar C_i, |\bar C_i|=\Delta x,\; i=1 \ldots  N\}$, 
		$U=\{u_i, i=i \ldots N\}$, and $V=\{v_i,i=1 \ldots N\}$. We call $\{\bar M, V\}$ the reference uniform mesh-solution pair to $\{M_x,U\}$ if 
		\begin{itemize}
			\item the meshes $M_x$ and $\bar M$ discretize the same physical domain, and
			\item the following per-cell mass conservation is satisfied for every $i=1, \ldots, N$
				\begin{equation}\label{PerCellCons1}
					\Delta x\, v_i=h_i\, u_i.
				\end{equation}
		\end{itemize}
	\end{definition}
	We prove in Lemma \ref{PerCellLemma}  that the per-cell conservation property \eqref{PerCellCons1} is a result of a geometric conservation law.

\subsubsection*{Geometric conservation law}
	Let us consider a a time dependent cell $C(\tau)=(x_1(\tau),x_2(\tau))$. We look for an appropriate conservation law,  
	\begin{equation}\label{GCL1}
		u_t(x,t)+\xi (u(x,t),x)_x=0,
	\end{equation}
	that expresses the mass conservation of $u$ over the moving cell $C(\tau)$. Thus, by the Leibniz rule,
	\begin{equation}\label{LeibRule}
		\frac{d}{d\tau} \int_{x_1(\tau)}^{x_2(\tau)}
		u(x,\tau)dx=u(x_2(\tau),\tau)x_2'(\tau)-u(x_1(\tau),\tau)x_1'(\tau)+\int_{x_1(\tau)}^{x_2(\tau)}u_t(x,\tau)dx.
	\end{equation}
	If the mass of $u$ over $C(\tau)$ remains constant with respect to $\tau$, the following condition holds
	\begin{equation}\label{MassCons}
		\int_{x_1(\tau)}^{x_2(\tau)}u_t(x,\tau)dx=-u(x_2(\tau),\tau)x_2'(\tau)+u(x_1(\tau),\tau)x_1'(\tau). 
	\end{equation}
	Integrating \eqref{GCL1} over $C(\tau)$ we obtain
	$$\int_{x_1(\tau)}^{x_2(\tau)}u_t(x,\tau)dx + \int_{x_1(\tau)}^{x_2(\tau)} \xi(u(x,\tau),x)_x dx=0.$$
	Now, using \eqref{MassCons} we get
	$$ \xi(u(x_2(\tau),\tau),x_2(\tau)) - \xi(u(x_1(\tau),\tau),x_1(\tau))=u(x_2(\tau),\tau)x_2'(\tau)-u(x_1(\tau),\tau)x_1'(\tau).$$
	A suitable flux function $\xi$ hence is
	\begin{equation}\label{ProperFlux}
		\xi(u(x(\tau),\tau),x(\tau))=u(x(\tau),\tau)x'(\tau).
	\end{equation}
	Therefore, the strong formulation of \eqref{GCL1} reads
	\begin{equation}\label{GCL}
		u_t(x,\tau)+(u(x,\tau)x_t)_x=0,
	\end{equation}
	which is referred in the literature as the Geometric Conservation Law (GCL), see e.g. \cite{Trullio.1961}, \cite{Thomas.1979}	, \cite{Huang.2002}.

	As previously announced we can attain the per-cell mass conservation property \eqref{PerCellCons1} by discretizing the corresponding GCL.

	\begin{lemma}\label{PerCellLemma}
		The per-cell mass conservation label \eqref{PerCellCons1} is a consequence of the geometric conservation law \eqref{GCL}.
	\end{lemma}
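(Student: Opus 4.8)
The plan is to realise the passage from the reference uniform pair $\{\bar M, V\}$ to the adapted pair $\{M_x, U\}$ as the evolution governed by the GCL \eqref{GCL} along a pseudo-time $\tau\in[0,1]$, and to show that \eqref{GCL} makes the mass carried by each moving cell an invariant of this evolution.

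First I would connect the two meshes by a homotopy of node positions: for each node set $y_i(\tau)=(1-\tau)\bar x_i+\tau x_i$, where $\bar x_i$ are the nodes of the uniform mesh $\bar M$ and $x_i$ those of the adapted mesh $M_x$. Then $y_i(0)=\bar x_i$, $y_i(1)=x_i$, the node velocity $y_i'(\tau)=x_i-\bar x_i$ is independent of $\tau$, and the cell $C_i(\tau)=\big(y_i(\tau),y_{i+1}(\tau)\big)$ has width $(1-\tau)\Delta x+\tau h_i$, interpolating between $\Delta x$ and $h_i$. Let $u(\cdot,\tau)$ solve \eqref{GCL} with the mesh velocity field prescribed by $x_t=y_i'$ over $C_i(\tau)$ and with initial datum $u(\cdot,0)\equiv v_i$ on $\bar C_i$.

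The core step is the mass identity. Integrating \eqref{GCL} in $x$ over the moving cell $C_i(\tau)$ and applying the Leibniz rule \eqref{LeibRule}, the boundary terms produced by differentiating the moving endpoints cancel exactly against $\int_{C_i(\tau)}(u\,x_t)_x\,dx$ evaluated by the fundamental theorem of calculus — this is precisely the cancellation for which the flux $\xi=u\,x_t$ was chosen in \eqref{MassCons}--\eqref{ProperFlux}. Hence $\dfrac{d}{d\tau}\displaystyle\int_{C_i(\tau)}u(x,\tau)\,dx=0$ for every $i$, i.e. the per-cell mass is conserved along the pseudo-time. Evaluating this conserved quantity at the two endpoints, at $\tau=0$ the datum equals $v_i$ on $\bar C_i$, so $\int_{C_i(0)}u\,dx=\Delta x\,v_i$; at $\tau=1$, reading $u_i$ as the conservative piecewise-constant reconstruction value of $u(\cdot,1)$ on $C_i=C_i(1)$ (Step 2 of MAS), $\int_{C_i(1)}u\,dx=h_i\,u_i$. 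Equating the two gives $\Delta x\,v_i=h_i\,u_i$, which is \eqref{PerCellCons1}.

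I expect the only subtle point to be the justification that the GCL flow sends the piecewise-constant datum on the uniform cells to a piecewise-constant profile on the adapted cells — equivalently, that the cells $C_i(\tau)$ are material for \eqref{GCL}. This holds because \eqref{GCL} is a continuity equation whose characteristics $\dot x=x_t$ travel with the mesh nodes, so no mass crosses cell interfaces and $u$ stays constant in $x$ on each cell, equal to $\Delta x\,v_i$ divided by the current cell width $(1-\tau)\Delta x+\tau h_i$. Making this transport argument precise — or simply adopting the piecewise-constant reconstruction of MAS at $\tau=1$ — is the part that needs care; the conservation identity $\tfrac{d}{d\tau}\int_{C_i(\tau)}u\,dx=0$ itself is immediate from the structure of \eqref{GCL}.
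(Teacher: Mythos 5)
Your proposal is correct and follows essentially the same route as the paper: a linear-in-$\tau$ interpolation between the uniform and adapted cells, integration of the GCL \eqref{GCL} over the moving cell with the Leibniz rule \eqref{LeibRule}, and the exact cancellation of the boundary fluxes built into the choice $\xi=u\,x_t$, yielding $\Delta x\,v_i=h_i\,u_i$. The only (inessential) difference is that the paper performs a one-step explicit discretization in the fictitious time where the cancellation is algebraic, whereas you keep the invariance $\tfrac{d}{d\tau}\int_{C_i(\tau)}u\,dx=0$ at the continuous level and then identify the endpoint cell averages via the conservative piecewise-constant reconstruction.
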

	\begin{proof}
		For every given cell-value pair $C_i$, $u_i$ and the respective reference pair $\bar C_i$, $v_i$ --as provided in the Definition \ref{RefMesh}-- we set the moving cell 
		$C(\tau)=(x_1(\tau), x_2(\tau))$ for $\tau\in[\tau_1,\tau_2]$ to be a linear interpolation of $C_i$ and $\bar C_i$
		$$C(\tau)=\frac {\tau-\tau_1}{\tau_2-\tau_1} \bar C_i + \frac{\tau_2-\tau}{\tau_2-\tau_1} C_i.$$
		Now, to attain a discrete version of \eqref{GCL} we integrate it over $C(\tau)$ 
		$$\int_{x_1(\tau)}^{x_2(\tau)} u_t(x,\tau) dx  +  \int_{x_1(\tau)}^{x_2(\tau)} \left( u(x(\tau),\tau) x_t(x,\tau) \right)_x dx=0,$$
		and invoke \eqref{LeibRule} to get
		$$\frac{d}{d\tau} \int_{x_1(\tau)}^{x_2(\tau)} u(x,\tau)dx-u(x_2(\tau),\tau)x_2'(\tau)+u(x_1(\tau),\tau)x_1'(\tau)+\int_{x_1(\tau)}^{x_2(\tau)} \left(u(x(\tau),\tau) x_t(x,\tau) \right)_x dx=0.$$
		We discretize explicitly in $[\tau_1, \tau_2]$, set $\Delta \tau=\tau_2-\tau_1$, and recall that $|C_i|=h_i$, and $\bar C_i=\Delta x$ to get
		$$\frac{1}{\Delta \tau}(\Delta x v_i-h_i u_i)-u_i\frac{x_2(\tau_2)-x_2(\tau_1)}{\Delta \tau} + u_i\frac{x_1(\tau_2)-x_1(\tau_1)}{\Delta \tau} 
							+ u_i\frac{x_2(\tau_2)-x_2(\tau_1)}{\Delta \tau}- u_i\frac{x_1(\tau_2)-x_1(\tau_1)}{\Delta \tau}=0$$
		or simply
		$$\Delta x\; v_i = h_i\; u_i.$$
	\end{proof}
	\begin{remark}
		The time variable $\tau$ used in the previous proof refers to ficticious time; it does not correspond to the physically relevant time.
	\end{remark}

	Let us point out that in the theoretical analysis we will use the reference uniform mesh-solution pair to combine the effects of mesh adaptivity and the numerical update. 

	In view of the Definition \ref{RefMesh}, and after applying the per-cell mass conservation \eqref{PerCellCons1} on the schematic representation \eqref{2grid1} of MAS, we 
	get for $i=1,\ldots, N$:
	\begin{equation}\label{PerCellCons}	
		h_i^n\, u_i^n=\Delta x\, v_i^n 	\xrightarrow{\text{mesh adapt.}} h_i^{n+1}\, \hat u_i^n=\Delta x\, \hat v_i^n 
								\xrightarrow{\text{num. scheme}} h_i^{n+1}\, u_i^{n+1}=\Delta x\, v_i^{n+1}.
	\end{equation}

	Now, by invoking \eqref{PerCellCons} after multiplying with $h_i^{n+1}$, we can rewrite the scheme \eqref{NonUniScheme} over the uniform reference mesh  
	\begin{equation}\label{UniScheme}
		v_i^{n+1}=\hat{v}_i^n-\frac{\Delta t}{\Delta x} \left(\hat F_{i+1/2}^n-\hat F_{i-1/2}^n\right),
	\end{equation}
	where $\hat F$ is the numerical flux function from \eqref{NonUniScheme} written in variables $\hat v$. 


\section{Entropy stablility}
	Before stating the main theoretical result we introduce the following notations:
	\begin{subequations}
	\begin{align}
		\Delta v_{i+1/2}^n&:=v_{i+1}^n-v_i^n,\label{1stNot}\\
		B_{i+1/2}^n&:=\frac{f(v_{i+1}^n)-f(v_i^n)}{\Delta v_{i+1/2}^n},\\
		Q_{i+1/2}^n&:=\frac{f(v_{i+1}^n)+f(v_i^n)-2\hat F_{i+1/2}^n}{\Delta v_{i+1/2}^n},\\
		\Delta x_{i+1/2}^{n+1/2}&:=x_{i+1/2}^{n+1}-x_{i+1/2}^{n},,\\
		H_{i+1/2}^n&:=\left( \frac{({\Delta x_{i+1/2}^{n+1/2}})_-}{h_i^n} v_i^n - \frac{({\Delta x_{i+1/2}^{n+1/2}})_+}{h_{i+1}^n} v_{i+1}^n \right),\\
		M_i^n&:=\tilde v_i^n\left(H_{i-1/2}^n - H_{i+1/2}^n\right),\label{lastNot}
	\end{align}
	\end{subequations}
	Now, we proceed with the main theoretical result. 
	\begin{theorem}
		We use the notations \eqref{1stNot}-\eqref{lastNot} and assume that the following condition holds 
		\begin{align}\label{MainCond}
			M_i^n \leq \frac{\Delta t}{4\Delta x}\Bigg\{ &\bigg( D_{i-1/2}^n - K^3\frac{\Delta t}{\Delta x} (B_{i-1/2}+Q^\ast_{i-1/2}+D_{i-1/2})^2 \bigg) (\Delta v_{i-1/2}^n)^2 \\
										+	 &\bigg( D_{i+1/2}^n- K^3\frac{\Delta t}{\Delta x} (B_{i+1/2}-Q^\ast_{i+1/2}-D_{i+1/2})^2 \bigg) (\Delta v_{i+1/2}^n)^2 \Bigg\},
		\end{align}
		where $\Delta x$, $\Delta t$ are respectively the space and time steps that correspond to the numerical scheme \eqref{NonUniScheme}. The mesh adaptation procedure \eqref{2grid1} is used, 
		where the corresponding reference uniform mesh is given in the Definition \ref{RefMesh}. Then the mesh adaptation procedure MAS \eqref{2grid1} with the numerical scheme 
		\eqref{NonUniScheme} for the time evolution step is entropy stable.
	\end{theorem}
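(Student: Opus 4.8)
The plan is to collapse the entire adaptive algorithm onto a single conservative three-point update over the reference uniform mesh and then carry out a fully-discrete, Tadmor-type entropy estimate on it; the new feature is that the mesh-reconstruction step contributes both an extra ``mesh flux'' inside the numerical flux and an extra local entropy source, which turns out to be exactly the quantity $M_i^n$ of \eqref{lastNot}.

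\emph{Reduction to one conservative scheme.} First I would combine Steps~1--2 of MAS with \eqref{NonUniScheme} and transport everything onto the uniform reference mesh using Lemma~\ref{PerCellLemma} and the chain \eqref{PerCellCons}. The conservative piecewise-constant reconstruction of Step~2, under the natural restriction that each node moves by less than a cell, transfers across the interface $x_{i+1/2}$ precisely the mass of the overlap $C_i^{n+1}\cap C_j^n$; rewritten in the $v$-variables through \eqref{PerCellCons1} this is exactly $H_{i+1/2}^n$, the positive and negative parts in \eqref{lastNot} recording whether the interface sweeps into $C_i$ or into $C_{i+1}$. Hence $\hat v_i^n=v_i^n+H_{i-1/2}^n-H_{i+1/2}^n$, and together with \eqref{UniScheme} the whole of MAS reads
$$v_i^{n+1}=v_i^n-\frac{\Delta t}{\Delta x}\big(\mathcal F_{i+1/2}^n-\mathcal F_{i-1/2}^n\big),\qquad \mathcal F_{i+1/2}^n:=\hat F_{i+1/2}^n+\frac{\Delta x}{\Delta t}\,H_{i+1/2}^n,$$
a conservative scheme on a fixed uniform grid whose numerical flux is $\hat F$ augmented by the mesh flux $\tfrac{\Delta x}{\Delta t}H$.

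\emph{Entropy decomposition.} Fix a convex entropy pair $(\eta,\psi)$, set $w_i^n=\eta'(v_i^n)$, and let $K$ be a constant bounding the relevant derivatives of $\eta$ and $f$ and the solution itself (its powers below are pure bookkeeping). Convexity gives $\eta(v_i^{n+1})-\eta(v_i^n)\le w_i^n(v_i^{n+1}-v_i^n)+\tfrac K2(v_i^{n+1}-v_i^n)^2$. Substituting the combined update, the linear term splits into the scheme part $-\tfrac{\Delta t}{\Delta x}\,w_i^n(\hat F_{i+1/2}^n-\hat F_{i-1/2}^n)$ and the mesh part $w_i^n(H_{i-1/2}^n-H_{i+1/2}^n)$, the latter being $M_i^n$ once $w_i^n$ is replaced by the intermediate state $\tilde v_i^n$. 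For the scheme part I would run the classical three-point manipulation: using \eqref{1stNot}--\eqref{lastNot} write $\hat F_{i+1/2}^n$ as an entropy-conservative flux minus $\tfrac12 Q^\ast_{i+1/2}\Delta v_{i+1/2}^n$ together with a free artificial viscosity $D_{i+1/2}^n$, so that a per-cell summation by parts produces a consistent numerical entropy flux $\Psi$ and a dissipative term of size $-\tfrac{\Delta t}{4\Delta x}\big[(Q^\ast_{i-1/2}+B_{i-1/2}+D_{i-1/2})(\Delta v_{i-1/2}^n)^2+(Q^\ast_{i+1/2}-B_{i+1/2}+D_{i+1/2})(\Delta v_{i+1/2}^n)^2\big]$, in which $Q^\ast\pm B\ge0$ is the usual three-point entropy condition, leaving $D$ as spare dissipation.

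\emph{Bounding the remainder and assembling.} The quadratic remainder $\tfrac K2(v_i^{n+1}-v_i^n)^2$ is controlled by regrouping $\mathcal F_{i+1/2}^n-\mathcal F_{i-1/2}^n$ into the jumps $\Delta v_{i\pm1/2}^n$ with coefficients built from $B$, $Q^\ast$ and $D$, squaring, and using $(a+b)^2\le 2a^2+2b^2$; this yields the terms $K^3\tfrac{\Delta t}{\Delta x}(B_{i\mp1/2}\pm Q^\ast_{i\mp1/2}\pm D_{i\mp1/2})^2(\Delta v_{i\mp1/2}^n)^2$, while the extra mesh-flux square $(H_{i-1/2}^n-H_{i+1/2}^n)^2$ is absorbed once the nodal displacement $\Delta x_{i+1/2}^{n+1/2}$ is restricted relative to $\Delta t$, $\Delta x$ and $h_i$. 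Collecting the pieces gives the cell-wise balance
$$\eta(v_i^{n+1})\le\eta(v_i^n)-\frac{\Delta t}{\Delta x}\big(\Psi_{i+1/2}^n-\Psi_{i-1/2}^n\big)+M_i^n-\frac{\Delta t}{4\Delta x}\Big[\big(D_{i-1/2}^n-K^3\tfrac{\Delta t}{\Delta x}(B_{i-1/2}+Q^\ast_{i-1/2}+D_{i-1/2})^2\big)(\Delta v_{i-1/2}^n)^2+\big(D_{i+1/2}^n-K^3\tfrac{\Delta t}{\Delta x}(B_{i+1/2}-Q^\ast_{i+1/2}-D_{i+1/2})^2\big)(\Delta v_{i+1/2}^n)^2\Big],$$
so the hypothesis \eqref{MainCond} is precisely what makes the last bracket dominate $M_i^n$, leaving the discrete entropy inequality $\eta(v_i^{n+1})\le\eta(v_i^n)-\tfrac{\Delta t}{\Delta x}(\Psi_{i+1/2}^n-\Psi_{i-1/2}^n)$. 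Summing over $i$ and using periodicity gives $\sum_i\eta(v_i^{n+1})\le\sum_i\eta(v_i^n)$, i.e. MAS \eqref{2grid1} with the scheme \eqref{NonUniScheme} is entropy stable.

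\emph{Main obstacle.} The delicate point is the last step: one must choose the artificial viscosity $D$ and the modified coefficient $Q^\ast$ so that simultaneously the three-point dissipation is nonpositive, the negative quadratic-remainder contribution is paid for by the spare dissipation $D$, and the mesh-flux square is absorbed under a mesh-motion/CFL restriction — and all of this must land on exactly the coefficients displayed in \eqref{MainCond}, which also forces one to track the constants (the powers of $K$) honestly through the whole chain.
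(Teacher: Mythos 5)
Your proposal is correct and follows essentially the same route as the paper: collapse MAS onto the reference uniform mesh via the per-cell conservation \eqref{PerCellCons1}, write the combined update in the conservative form \eqref{CombNumScheme} with the mesh flux $\frac{\Delta x}{\Delta t}H_{i+1/2}^n$, identify $M_i^n=\tilde v_i^n\big(H_{i-1/2}^n-H_{i+1/2}^n\big)$ as the entropy production of the mesh-reconstruction/solution-update steps, and close with the Tadmor-type bounds on the spatial and forward-Euler entropy terms so that \eqref{MainCond} is precisely the sufficiency condition. Two bookkeeping differences are worth noting. First, the paper fixes the quadratic entropy $U(u)=\frac12 u^2$ so that entropy and conservative variables coincide, whereas you keep a general convex pair with a Taylor bound; this is harmless in spirit, but your step ``the latter being $M_i^n$ once $w_i^n$ is replaced by $\tilde v_i^n$'' is only immediate in the quadratic case the paper actually treats. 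Second, in the quadratic (forward-Euler) remainder you honestly retain the mesh contribution $\big(H_{i-1/2}^n-H_{i+1/2}^n\big)^2$ and the cross terms, and propose to absorb them under an additional restriction on the nodal displacements; the paper instead bounds $\mathcal E_i^{(FE)}$ by quoting the non-adaptive estimate of the appendix, i.e.\ using only the flux part of the increment, so this term never appears there. Your accounting is arguably the more careful one, but be aware that the mesh-motion/CFL restriction you invoke to absorb that square is not contained in the hypothesis \eqref{MainCond}, so as sketched your argument establishes the theorem under a slightly strengthened assumption (or, read the other way, it exposes a term the paper's own proof silently drops).
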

	\begin{proof}
		The numerical scheme for the uniform variables reads, cf. \eqref{UniScheme} 
		$$v_i^{n+1}=\hat{v}_i^n-\frac{\Delta t}{\Delta x} \left(\hat F_{i+1/2}^n-\hat F_{i-1/2}^n\right).$$
		We subtract $v_i^n$ to develop the respective incremental form 
		$$v_i^{n+1}-v_i^n=\hat v_i^n -v_i^n -\frac{\Delta t}{2\Delta x} \left(2\hat F_{i+1/2}^n-2\hat F_{i-1/2}^n\right).$$
		Equivalently 
		\begin{subequations}
		\begin{align}
			v_i^{n+1}-v_i^n	&=\hat v_i^n -v_i^n\color{black}-\frac{\Delta t}{2\Delta x} \bigg(f(v_{i+1}^n)+f(v_i^n) -\frac{f(v_{i+1}^n)+f(v_i^n)-2\hat
				F_{i+1/2}^n}{v_{i+1}^n-v_i^n}(v_{i+1}^n-v_i^n)\nonumber\\
						&\hspace{7.5em} -f(v_i^n)-f(v_{i-1}^n)+\frac{f(v_i^n)+f(v_{i-1}^n)-2\hat F_{i-1/2}^n}{v_i^n-v_{i-1}^n}(v_i^n-v_{i-1}^n)\bigg)\label{SemiIncr} \\
						&=	\hat v_i^n -v_i^n-\frac{\Delta t}{2\Delta x} \bigg(\left(B_{i+1/2}^n-Q_{i+1/2}^n\right)\Delta v_{i+1/2}^n + \left(B_{i-1/2}^n+Q_{i-1/2}^n\right)
									\Delta v_{i-1/2}^n\bigg).\label{SemiIncr2}
		\end{align}
		\end{subequations}

		We point out that the term $\hat v_i^n -v_i^n$ is new and accounts for the mesh reconstruction and solution update steps
		of the MAS \eqref{MAS}.\\
		We now express $\hat v_i^n-v_i^n$ in a conservative form with respect to $\left\{v _j^n\right\}$. Accordingly the size of $C_i$ changes as:
		\begin{equation}\label{GenMove}
			h_i^{n+1}=h_i^n +{\Delta x_{i+1/2}^{n+1/2}} - {\Delta x_{i-1/2}^{n+1/2}},
		\end{equation}
		and the the mass of $u$ over $C_i$ as:
		\begin{equation}\label{MassChange1}
			h_i^{n+1} \hat u_i^n =h_i^n u_i^n 	- ({\Delta x_{i+1/2}^{n+1/2}})_- u_i^n + ({\Delta x_{i+1/2}^{n+1/2}})_+ u_{i+1}^n  
										+  ({\Delta x_{i-1/2}^{n+1/2}})_- u_{i-1}^n - ({\Delta x_{i-1/2}^{n+1/2}})_+ u_i^n
		\end{equation}
		\eqref{MassChange1} recasts to:
		$$\hat v_i^n =v_i^n - \frac{({\Delta x_{i+1/2}^{n+1/2}})_-}{h_i^n} v_i^n + \frac{({\Delta x_{i+1/2}^{n+1/2}})_+}{h_{i+1}^n} v_{i+1}^n
						+ \frac{({\Delta x_{i-1/2}^{n+1/2}})_-}{h_{i-1}^n} v_{i-1}^n - \frac{({\Delta x_{i-1/2}^{n+1/2}})_+}{h_i^n}v_i^n.$$
		This relation can also be written as a conservative difference
		\begin{align}
			\hat v_i^n -v_i^n	&=	\left( \frac{({\Delta x_{i-1/2}^{n+1/2}})_-}{h_{i-1}^n} v_{i-1}^n - \frac{({\Delta x_{i-1/2}^{n+1/2}})_+}{h_i^n}v_i^n \right) 
								- \left( \frac{({\Delta x_{i+1/2}^{n+1/2}})_-}{h_i^n} v_i^n - \frac{({\Delta x_{i+1/2}^{n+1/2}})_+}{h_{i+1}^n} v_{i+1}^n \right)\label{MassChange2}\\
							&=	H_{i-1/2}^n- H_{i+1/2}^n\label{MassChange3}
		\end{align}
		Replacing \eqref{MassChange3} in \eqref{SemiIncr2} we obtain
		\begin{align}
			v_i^{n+1}-v_i^n=	&H_{i-1/2}^n- H_{i+1/2}^n\nonumber\\
							&-\frac{\Delta t}{2\Delta x} \bigg(\left(B_{i+1/2}^n-Q_{i+1/2}^n\right)\Delta v_{i+1/2}^n + \left(B_{i-1/2}^n+Q_{i-1/2}^n\right),
									\Delta v_{i-1/2}^n\bigg)\label{SemiIncr3}
		\end{align}
		which can be analogously written as a conservative update over the reference uniform mesh 
		\begin{equation}\label{CombNumScheme}
			v_i^{n+1}=v_i^n-\frac{\Delta t}{\Delta x}\left(\frac{\Delta x}{\Delta t}H_{i+1/2}+\hat F_{i+1/2}^n -\frac{\Delta x}{\Delta t}H_{i-1/2}-\hat F_{i-1/2}^n  \right).
		\end{equation}
		We note that the conservative difference $H_{i-1/2}^n- H_{i+1/2}^n$ accounts for the mesh reconstruction and the solution update step of the MAS.\\
		
		In order to simplify the presentation of the rest of the proof we assume that the entropy and the conservative variables ($\tilde v$ and $v$, respectively) coincide, i.e. we choose 
		$U(u)=\frac{1}{2} u^2$ for the entropy function. Now, to recover the entropy-entropy flux representation of \eqref{SemiIncr3}, we multiply it by the entropy variables $\tilde v_i^n$,
		yielding
		\begin{align}
			U(v_i^{n+1})-U(v_i^n)+\frac{\Delta t}{\Delta x}\big( G_{i+1/2}^n-G_{i-1/2}^n \big)
				&=M_i^n-\frac{\Delta t}{\Delta x}\mathcal E_i^{(x)} + \mathcal E_i^{(FE)}(\Delta v^{n+1/2})\label{FinRel}
		\end{align}
		where $G$ is the numerical entropy flux. We have further following \cite{Tadmor.2003},
		\begin{align*}	
			\mathcal E_i ^{(x)} 		&= \frac{1}{4}\left[D_{i-1/2}\Delta v_{i-1/2}^2 + D_{i+1/2}\Delta v_{i+1/2}^2\right]\\
			\mathcal E_i^{(FE)} 	&\leq \frac{K^3}{4}\left(\frac{\Delta t}{\Delta x}\right)^2\left[ 	(B_{i+1/2}-Q_{i+1/2})^2\Delta v_{i+1/2}^2 + 
																				(B_{i-1/2}+Q_{i-1/2})^2\Delta v_{i-1/2}^2\right],\\
								&\leq \frac{K^3}{4}\left(\frac{\Delta t}{\Delta x}\right)^2\left[ 	(B_{i+1/2}-Q^\ast_{i+1/2}-D_{i+1/2})^2\Delta v_{i+1/2}^2 + 
																				(B_{i-1/2}+Q^\ast_{i-1/2}+D_{i-1/2})^2\Delta v_{i-1/2}^2\right].
		\end{align*}
		For more details see also Appendix \ref{TadmorCorrected}. 
		Now, \eqref{FinRel} reduces to 
		\begin{align*}
			&U(v_i^{n+1})-U(v_i^n)+\frac{\Delta t}{\Delta x}\big( G_{i+1/2}^n-G_{i-1/2}^n \big) \leq M_i^n -\frac{\Delta t}{4\Delta x}\left( D_{i-1/2}\Delta v_{i-1/2}^2 + D_{i+1/2}\Delta v_{i+1/2}^2 \right) \\
								&+ \frac{K^3}{4}\left(\frac{\Delta t}{\Delta x}\right)^2\left[ 	(B_{i+1/2}-Q^\ast_{i+1/2}-D_{i+1/2})^2\Delta v_{i+1/2}^2 + 
																				(B_{i-1/2}+Q^\ast_{i-1/2}+D_{i-1/2})^2\Delta v_{i-1/2}^2\right],
		\end{align*}
		Hence, the sufficiency condition for entropy stability reads 
		\begin{align*}
			M_i^n \leq \frac{\Delta t}{4\Delta x}\Bigg\{ &\bigg( D_{i-1/2}^n - K^3\frac{\Delta t}{\Delta x} (B_{i-1/2}+Q^\ast_{i-1/2}+D_{i-1/2})^2 \bigg) (\Delta v_{i-1/2}^n)^2 \\
										+	 &\bigg( D_{i+1/2}^n- K^3\frac{\Delta t}{\Delta x} (B_{i+1/2}-Q^\ast_{i+1/2}-D_{i+1/2})^2 \bigg) (\Delta v_{i+1/2}^n)^2 \Bigg\}.
		\end{align*}
	\end{proof}
	
	\begin{remark}
		We point out that numerical scheme \eqref{CombNumScheme}
		$$v_i^{n+1}=v_i^n-\frac{\Delta t}{\Delta x}\left(\frac{\Delta x}{\Delta t}H_{i+1/2}+\hat F_{i+1/2}^n -\frac{\Delta x}{\Delta t}H_{i-1/2}-\hat F_{i-1/2}^n  \right),$$
		is written using uniform variables and incorporates both the time evolution step and the adaptation of the mesh.
	\end{remark}

	\begin{figure}[t]
		\centering
		\subfigure[The cell $i$ moves to the left] {\includegraphics[width=0.4\linewidth]{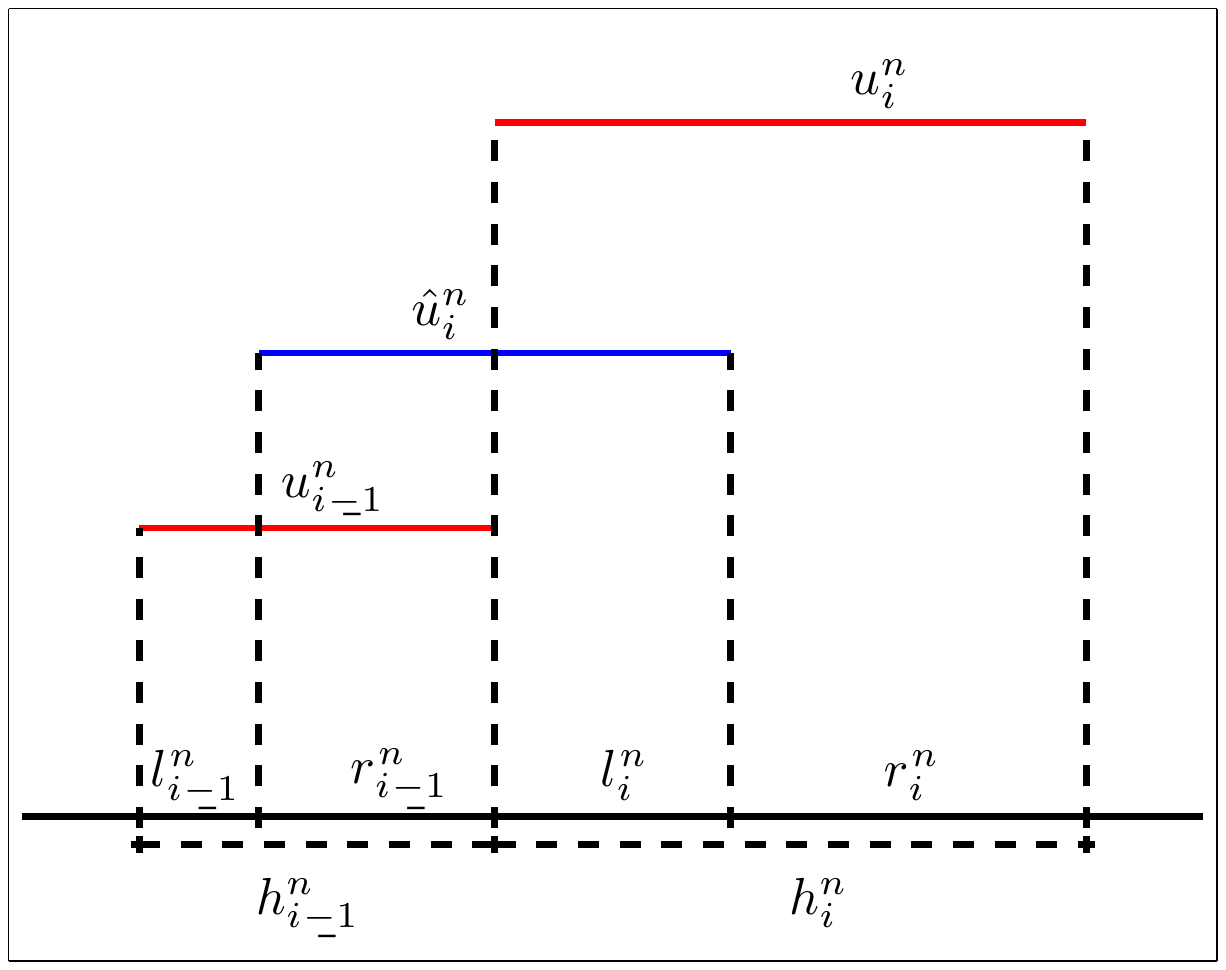}} 
		\subfigure[The cell $i$ moves to the right]{\includegraphics[width=0.4\linewidth]{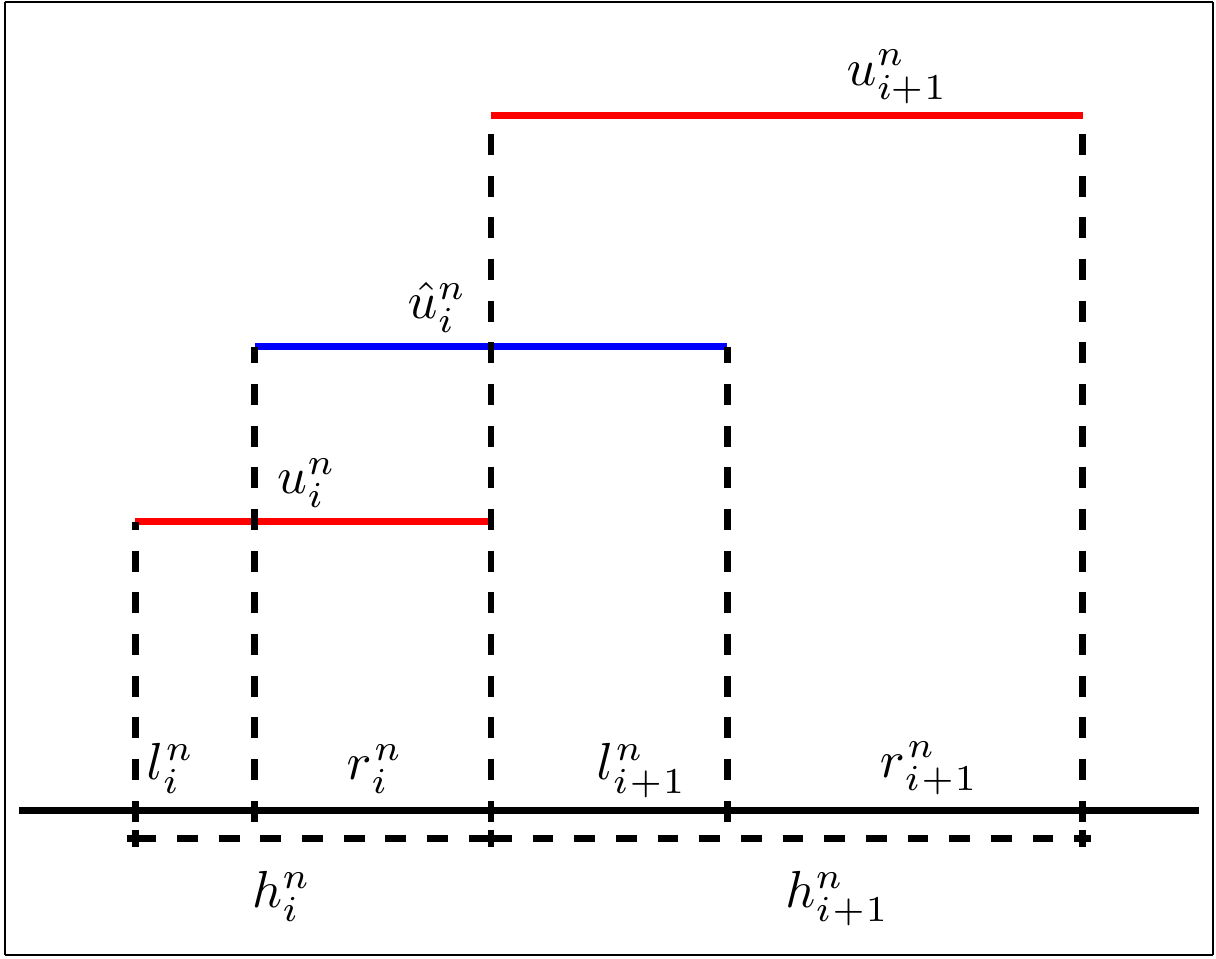}}\\
		\subfigure[The cell $i$ moves to both directions]{\includegraphics[width=0.4\linewidth]{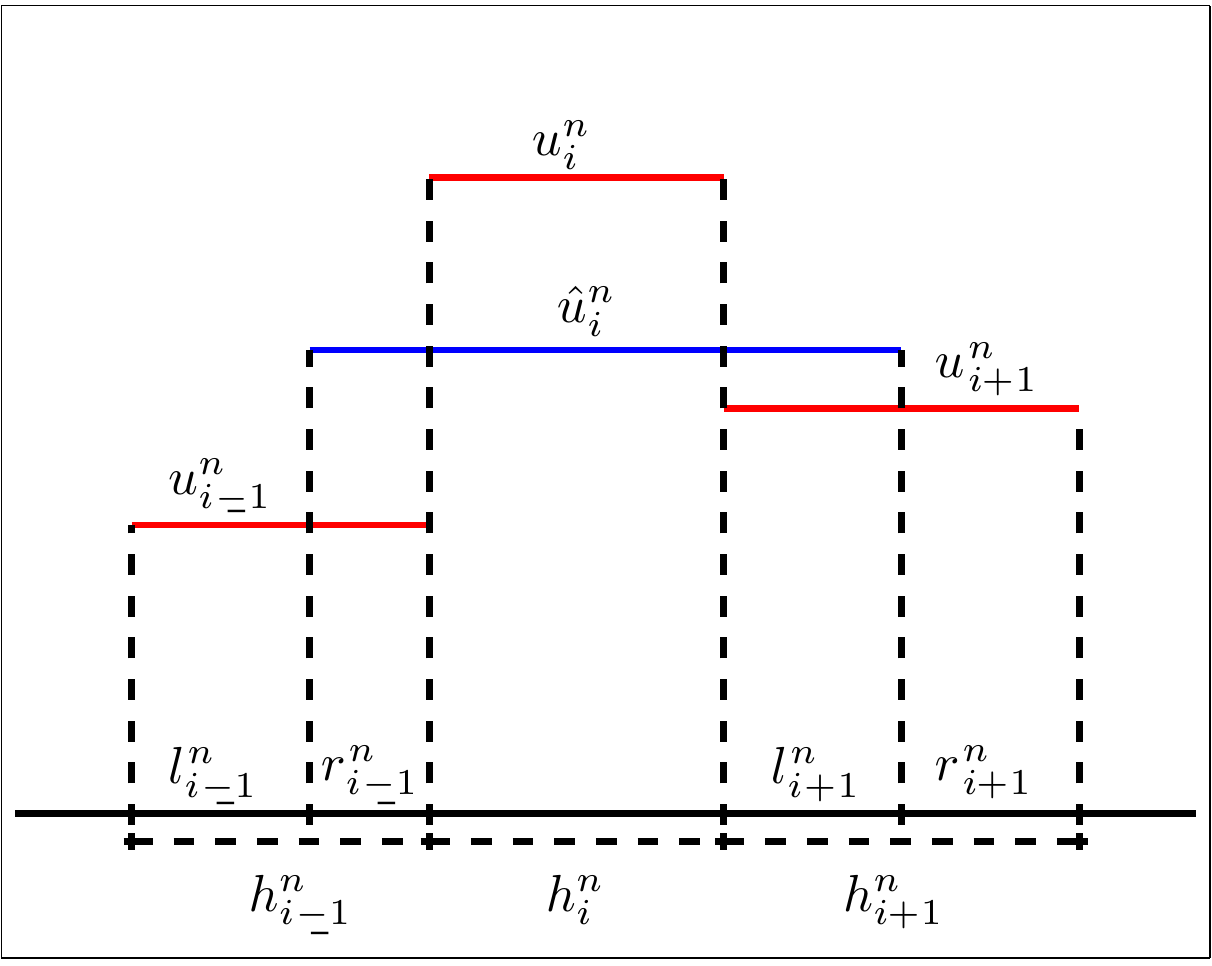}}
		\caption{Three cases of cell movement.}\label{FigMeshMove}
	\end{figure}

	\begin{example}
		To gain further insight into \eqref{GenMove}, \eqref{MassChange3} we refer to Figure \ref{FigMeshMove} and provide three special examples. 
		\begin{itemize}
			\item Cell moves to the left.\\
				This means that ${\Delta x_{i+1/2}^{n+1/2}}=-r_i^n$, ${\Delta x_{i-1/2}^{n+1/2}}=-r_{i-1}^n$, so $h_i^{n+1}=h_i^n -r_i^n+r_{i-1}^n=r_{i-1}^n +l_i^n$.
				The mass of $u$ satisfies $h_i^{n+1} \hat u_i^n=r_{i-1}^n u_{i-1}^n+l_i^nu_i^n$; passing in $v$ variables 
				$\hat v_i^n =\frac{r_{i-1}^n}{h_{i-1}^n}v_{i-1}^n+\frac{l_i^n}{h_i^n} v_i^n$ or 
				\begin{equation}\label{Exam_1}
					\hat v_i^n-v_i^n =\frac{r_{i-1}^n}{h_{i-1}^n}v_{i-1}^n - \frac{r_i^n}{h_i^n} v_i^n.
				\end{equation}
			\item Cell moves to the right.\\
				This means that ${\Delta x_{i+1/2}^{n+1/2}}=l_{i+1}^n$, ${\Delta x_{i-1/2}^{n+1/2}}=l_i^n$, so $h_i^{n+1}=h_i^n +l_{i+1}^n-l_i^n=r_i^n +l_{i+1}^n$. In this case,
				the mass of $u$ satisfies $h_i^{n+1} \hat u_i^n=r_i^n u_i^n+l_{i+1}^nu_{i+1}^n$, or in $v$ variables 
				$\hat v_i^n=\frac{r_i^n}{h_i^n}v_i^n+\frac{l_{i+1}^n}{h_{i+1}^n} v_{i+1}^n$, or
				\begin{equation}\label{Exam_2}
					\hat v_i^n-v_i^n =-\frac{l_i^n}{h_i^n}v_i^n + \frac{l_{i+1}^n}{h_{i+1}^n} v_{i+1}^n.
				\end{equation}
			\item Cell moves to both directions.\\
				Similarly, ${\Delta x_{i+1/2}^{n+1/2}}=l_{i+1}^n$, ${\Delta x_{i-1/2}^{n+1/2}}=-r_{i-1}^n$, so $h_i^{n+1}=h_i^n +l_{i+1}^n+r_{i-1}^n$. Moreover 
				$h_i^{n+1} \hat u_i^n=r_{i-1}^n u_{i-1}^n + h_i^n u_i^n + l_{i+1}^n u_{i+1}^n$, or $\hat v_i^n =\frac{r_{i-1}^n}{h_{i-1}^n}v_{i-1}^n + v_i^n +\frac{l_{i+1}^n}{h_{i+1}^n} v_{i+1}^n$, 
				or
				\begin{equation}\label{Exam_3}
					\hat v_i^n -v_i^n =\frac{r_{i-1}^n}{h_{i-1}^n}v_{i-1}^n + \frac{l_{i+1}^n}{h_{i+1}^n} v_{i+1}^n.
				\end{equation}
		\end{itemize}
		Let us point that now the conditions \eqref{Exam_1}-\eqref{Exam_3} are simple enough in order to be tested for a moving mesh algorithm. We will report numerical experiments in a future work. 
	\end{example}


\section{Conclusions}
	We provide in this work a new framework of studying the combined effect of mesh adaptation and time evolution of a numerical solution. This new method, has the benefit of
	being described over a uniform ``underlying'' grid that resolves the physical domain with the same number of discretization nodes as the numerical solution itself. To exhibit 
	properties of this technique we study the dissipation of entropy due to the adaptation of the mesh. We have derived a sufficient condition for the mesh movement in order 
	to guarantee that the overall procedure dissipates the entropy. Consequently the resulting numerical scheme MAS \eqref{MAS}.


\vfill
\paragraph{Aknowledgements}~\\
	The authors wish to thank E. Tadmor and Ch. Makridakis for the very useful discussions and suggestions. This work has been partially supported by the research center of 
	Computational sciences in Mainz as well as by the Humboldt foundation. The authors gratefully acknowledge this support. 

\appendix
\section{Entropy stable schemes}\label{TadmorCorrected}
	In this section, for the sake of completeness, we present parts -after some modifications- of the analysis conducted in \cite{Tadmor.1987, Tadmor.2003}. We moreover note 
	that these works are to be viewed in the context of the seminal works on the subject by \cite{Lax.1971, Mock.1978}.
	
	A finite volume approximation of the one-dimensional conservation law is written in the form
	$$u_i ^{n+1}=u_i^n -\frac{\Delta t}{\Delta x}\left( F_{i +1/2}(u^n) - F_{i-1/2}(u^n)\right ).$$
	The corresponding viscosity form reads
	\begin{align}\label{ViscousForm}
		u_i ^{n+1}-u_i^n =\frac{\Delta t}{2\Delta x}\bigg\{ 	&	-\Big( f(u_{i+1}^n)- f(u_u^n) \Big) +Q_{i+1/2}^n \Delta u_{i+1/2}^n \nonumber \\
												&	-\Big( f(u_{i}^n)- f(u_{i-1}^n) \Big) -Q_{i -1/2}^n \Delta u_{i-1/2}^n \bigg\}
	\end{align}
	with the numerical viscosity given by $Q_{i+1/2}^n = \frac{f(u_i^n)+f(u_{i+1}^n)-2F_{i+1/2}^n}{\Delta u_{i+1/2}^n}$. After denoting $B_{i+1/2} =\frac{f(u_{i+1}^n)-f(u_i^n)}{\Delta u_{i+1/2}^n}$
	the equation \eqref{ViscousForm} can be rewritten as 
	$$u_i ^{n+1} - u_i^n 	=-\frac{\Delta t}{2\Delta x}\bigg( \Big( B_{i+1/2} - Q_{i +1/2} \Big) \Delta u_{i+1/2}^n + \Big( B_{i-1/2} + Q_{i -1/2} \Big) \Delta u_{i-1/2}^n \bigg).$$
	Now, setting $D_{i+1/2}=Q_{i+1/2}-Q_{i+1/2}^\ast$, with $Q^\ast$ the numerical viscosity of an entropy conservative scheme we obtain
	$$u_i ^{n+1}-u_i^n=-\frac{\Delta t}{2\Delta x}\bigg( 	\Big( B_{i+1/2} - Q_{i +1/2}^\ast - D_{i +1/2} \Big)  \Delta v_{i+1/2}^n 
				+	\Big( B_{i-1/2} + Q_{i -1/2}^\ast + D_{i -1/2} \Big)  \Delta u_{i-1/2}^n \bigg).$$
	Next we multiply the last equation with the entropy variables $\tilde v$. After noting that
	\begin{align*}
		\left< \tilde v_i^n,u_i^{n+1}-u_i^n \right> = U(u_i^{n+1}) -U(u_i^n)-\mathcal E_i^{(FE)}(\tilde v_i^{n+1/2}),\\
		\left< \tilde v_i^n,F_{i+1/2}^n-F_{i-1/2}^n\right>=G_{i+1/2}^n-G_{i-1/2}^n + \mathcal E_i^x(\tilde v_i^n),
	\end{align*}
	where $\mathcal E_i^{(FE)}(\tilde v_i^{n+1/2})$, $\mathcal E_i^x(\tilde v_i^n)$ are given by 
	\begin{align*}
		\mathcal E_i^{(FE)}(\tilde v_i^{n+1/2}) &=\int_{\xi=-1/2}^{\xi=1/2}\left( \frac 1 2 +\xi \right )\left<\Delta \tilde v_i^{n+1/2} , H(\tilde v_i^{n+1/2}) \Delta \tilde v_i^{n+1/2}\right>,\\
		\mathcal E_i^x(\tilde v_i^n) &=\frac{1}{4} \left< \Delta \tilde v_{i-1/2}^n, D_{i-1/2}^n\Delta \tilde v_{i-1/2}^n \right> + \frac{1}{4} \left< \Delta \tilde v_{i+1/2}^n, D_{i+1/2}\Delta \tilde v_{i+1/2}^n \right>,
	\end{align*}
	we arrive to the entropy-entropy flux pair representation
	$$U(v_i^{n+1}) -U(v_i^n) + \frac{\Delta t}{\Delta x} \bigg\{F_{i+1/2}^n-F_{i-1/2}^n\bigg\} = \mathcal E_i^{(FE)}(\tilde v^{n+1/2})-\frac{\Delta t}{\Delta x} \mathcal E_i^x(\tilde v^n).$$
	Now, entropy stability implies that:
	\begin{equation}\label{EntrCond}
		 \mathcal E_i^{(FE)}(\tilde v^{n+1/2})-\frac{\Delta t}{\Delta x} \mathcal E_i^x(\tilde v^n)\leq 0.
	\end{equation}
	In order to satisfy \eqref{EntrCond} the following conditions are sought as sufficient
	\begin{align}
		K^3 \frac{\Delta t}{\Delta x} (B_{i+1/2}-Q_{i+1/2}^\ast-D_{i+1/2})^2\leq D_{i+1/2}\nonumber\\
		K^3 \frac{\Delta t}{\Delta x} (B_{i-1/2}+Q_{i-1/2}^\ast+D_{i-1/2})^2\leq D_{i-1/2}\nonumber
	\end{align}
	for every $i$. Due to symmetry they give
	\begin{equation}
		K^3 \frac{\Delta t}{\Delta x} (B_{i+1/2} \pm Q_{i+1/2}^\ast \pm D_{i+1/2})^2\leq D_{i+1/2}
	\end{equation}
	The last relation yields the following results for $c=\frac{\Delta x}{K^3 \Delta t}$
	\begin{align*}
		D_{i+1/2}&\geq 0\\
		(B\pm Q^\ast \pm D_{i+1/2})^2&\leq c D_{i+1/2}.
	\end{align*}
	This leads to $D_{i+1/2}^2+(2(Q^\ast\pm B)-c)D_{i+1/2}+(Q^\ast\pm B)^2\leq 0$. By setting $k_1=Q^\ast + B$ and $k_2=Q^\ast - B$ we get two different quadratic inequalities 
	that need to be satisfied for $D_{i+1/2}$:
	$$D_{i+1/2}^2+(2k_1-c)D_{i+1/2}+k_1^2\leq 0 \text{ and } D_{i+1/2}^2+(2k_2-c)D_{i+1/2}+k_2^2\leq 0.$$
	The necessary restrictions for the existence of a solution are $c\geq 4k_1$ and $c\geq 4k_2$. This mean that  $Q^\ast\pm B\leq c/4$,  i.e $c\geq 4Q^\ast$ or 
	$$\frac{\Delta t }{\Delta x}\leq \frac{1}{ 4K^3 Q^\ast}.$$



\nocite{*}
\bibliographystyle{agsm}
\bibliography{LukSfak}
\end{document}